\DeclareMathOperator{\I}{I}
\DeclareMathOperator{\har}{har}
\DeclareMathOperator{\KZ}{KZ}
\DeclareMathOperator{\dR}{dR}
\DeclareMathOperator{\Hom}{Hom}
\DeclareMathOperator{\un}{un}
\DeclareMathOperator{\Li}{Li}
\DeclareMathOperator{\PGL}{PGL}
\theoremstyle{definition}
\newtheorem{Théorème}{Théorème}[section]
\newtheorem{Proposition}[Théorème]{Proposition}
\newtheorem{Notation}[Théorème]{Notation}
\newtheorem{Nota Bene}[Théorème]{Nota Bene}
\newtheorem{Remark}[Théorème]{Remark}
\newtheorem{Example}[Théorème]{Example}
\newtheorem{Definition}[Théorème]{Definition}
\newtheorem{Proposition-Definition}[Théorème]{Proposition-Definition}
\newtheorem{N.B.}[Théorème]{N.B.}
\DeclareFontFamily{U}{russian}{}
\DeclareFontShape{U}{russian}{m}{n}
        { <5><6> wncyr5
        <7><8><9> wncyr7
        <10><10.95><12><14.4><17.28><20.74><24.88> wncyr10 }{}
\DeclareSymbolFont{Russian}{U}{russian}{m}{n}
\DeclareSymbolFontAlphabet{\mathcyr}{Russian}
\let\@math@cyr\mathcyr
\renewcommand{\mathcyr}[1]{\@math@cyr{\cyracc #1}}
\newcommand{\sh}{\mathcyr{sh}} 
\newcounter {subsubsubsection}[subsubsection]
\renewcommand\thesubsubsubsection{\thesubsubsection .\@alph\c@subsubsubsection}
\newcommand\subsubsubsection{\@startsection{subsubsubsection}{4}{\z@}%
                                     {-3.25ex\@plus -1ex \@minus -.2ex}%
                                     {1.5ex \@plus .2ex}%
                                     {\normalfont\normalsize\bfseries}}
\newcommand*\l@subsubsubsection{\@dottedtocline{3}{10.0em}{4.1em}}
\newcommand*{\subsubsubsectionmark}[1]{}
\address{Universit\'{e} de Gen\`{e}ve, Section de math\'{e}matiques, 2-4 rue du Li`{e}vre,
Case postale 64
1211 Gen\`{e}ve 4, Suisse}
\email{david.jarossay@unige.ch}
\begin{document}

\author{David Jarossay}

\title{On generic double shuffle relations, localized multiple polylogarithms and algebraic functions}

\maketitle

\begin{abstract}
We define subvarieties of $\mathcal{M}_{0,n}$ equipped with algebraic functions that are solutions to the generic double shuffle equations satisfied by multiple polylogarithms on $\mathcal{M}_{0,n}$.
\end{abstract}

\noindent

\tableofcontents

\section{Introduction}

\subsection{Multiple zeta values and double shuffle relations}

Multiple zeta values are the following real numbers, for any positive integers $d$ and 
$n_{i}$ ($1 \leq i \leq d$) with $n_{d} \geq 2$ :
\begin{equation}
\label{eq:1} \zeta(n_{1},\ldots,n_{d}) 
= \sum_{0<m_{1}<\ldots<m_{d}} \frac{1}{m_{1}^{n_{1}} \ldots m_{d}^{n_{d}}} 
\end{equation}
\noindent Denoting by $n = \sum_{i=1}^{d} n_{i}$, and $(\epsilon_{n},\ldots,\epsilon_{1}) 
= (\overbrace{0 \ldots 0}^{n_{d}-1}, 1,\ldots,\overbrace{0 \ldots 0}^{n_{1}-1}, 1)$, one has 
\begin{equation}
\label{eq:2} \zeta(n_{1},\ldots,n_{d})  = (-1)^{d} \int_{t_{n}=0}^{1} \frac{dt_{n}}{t_{n}-\epsilon_{n}} \cdots \int_{t_{1}=0}^{t_{2}} \frac{dt_{1}}{t_{1}-\epsilon_{1}} 
\end{equation}

Details on what follows are given in \cite{IKZ} Multiple zeta values satisfy : 

(i) The quasi-shuffle relation : an iterated sum of series as in (\ref{eq:1}) is canonically equal to a $\mathbb{Z}$-linear combination of iterated sums of series 
(for instance $\sum\limits_{0<n_{1}} \times \sum\limits_{0<n'_{1}} = 
\sum\limits_{0<n_{1}=n'_{1}} + \sum\limits_{0<n_{1}<n'_{1}} + \sum\limits_{0<n'_{1}<n_{1}}$). This gives, for any 
$w=(n_{1},\ldots,n_{d})$ and $w'=(n'_{1},\ldots,{n'}_{d'})$, 
\begin{equation} \label{eq:first shuffle}\zeta(w)\zeta(w')=\zeta(w \ast w')
\end{equation}
where $\ast$ is an operation on indices called the quasi-shuffle procuct. 

(ii) The shuffle relation : a product of two integrals on simplices is canonically equal to a $\mathbb{Z}$-linear combination of integrals on simplices : for any positive integers $n,n'$, $\int_{0<t_{1}<\ldots < t_{n}<1} \int_{0<t_{n+1}<\ldots t_{n+n'}<1} =$
$\sum\limits_{\substack{\sigma \text{ permutation of }\{1,\ldots,l+l'\} \\\text{s.t. } \sigma(1)<\ldots<\sigma(n)\\ \text{and } \sigma(l+l)<\ldots<\sigma(n+n')}}
\int_{0<t_{\sigma^{-1}(1)}<\ldots < t_{\sigma^{-1}(n+n')<1}}$. This gives, for any 
$w=(n_{1},\ldots,n_{d})$ and $w'=(n'_{1},\ldots,{n'}_{d'})$, 
\begin{equation} \label{eq:second shuffle}\zeta(w)\zeta(w')=\zeta(w\text{ }\sh\text{ }w') 
\end{equation}
where $\sh$ is an operation on indices called the shuffle product. 

The double shuffle relations are equations (\ref{eq:first shuffle}) and (\ref{eq:second shuffle}). When one adds to them some data about the regularization of iterated integrals and iterated series in the $n_{d}=1$ case, they conjecturally generate all polynomial equations over $\mathbb{Q}$ satisfied by multiple zeta values.

\subsection{Multiple polylogarithms and generic double shuffle relations}

We are going to work not with multiple zeta values but with multiple polylogarithms, introduced in \cite{Goncharov}, because it makes the setting more flexible. They can be defined as power series expansions : for $z_{1},\ldots,z_{d}$ in $\mathbb{C} - \{0\}$, and for $z$ in $\mathbb{C}$ close to $0$,

\begin{equation} \label{eq:1Li} \Li((n_{i})_{d};(z_{i})_{d})(z) = \sum_{0<m_{1}<\ldots<m_{d}} 
\frac{(\frac{z_{2}}{z_{1}})^{m_{1}} \ldots (\frac{z}{z_{d}})^{m_{d}}}{m_{1}^{n_{1}} \ldots m_{d}^{n_{d}}} 
\end{equation}

They can also more generally be defined as iterated integrals which gives an analytical continuation of multiple polylogarithms (\cite{Goncharov}, Theorem 2.2) :
for $n = \sum_{i=1}^{d} n_{i}$, and $(\epsilon_{n},\ldots,\epsilon_{1})
= (\overbrace{0 \ldots 0}^{n_{d}-1},z_{d} ,\ldots,\overbrace{0 \ldots 0}^{n_{1}-1},z_{1})$

\begin{equation}
\label{eq:2Li} \Li((n_{i})_{d};(z_{i})_{d})(\gamma) = (-1)^{d} \int_{t_{n}=0}^{1} \frac{dt_{n}}{t_{n}-\epsilon_{n}}) \cdots \int_{t_{1}=0}^{t_{2}} \frac{dt_{1}}{t_{1}-\epsilon_{1}})
\end{equation}

By these two expressions, they satisfy an extension of equations (\ref{eq:first shuffle}) and (\ref{eq:second shuffle}), proved similarly, which we call \emph{generic double shuffle equations}. Thus we will also write, for all indices $w,w'$,

\begin{equation}
\label{eq:generic1}  \Li(w)\Li(w') = \Li(w \ast w') \end{equation}
\begin{equation}
\label{eq:generic2}  \Li(w)\Li(w') = \Li(w\text{ }\sh\text{ }w') \end{equation}

The precise formulation is reviewed in \S2.

\subsection{Motivation from the study of $p$-adic multiple zeta values}

The motivation for this paper comes from the study of $p$-adic multiple zeta values (and their cyclotomic generalizations), as in \cite{J1}, \cite{J2}, \cite{J3}, \cite{J4}, \cite{J5}, \cite{J6}, \cite{J7}, \cite{J8}. $p$-adic multiple zeta values are defined as the $p$-adic analogues of the integrals (\ref{eq:2}). The papers cited above start with the question of finding $p$-adic analogues of the sums of series (\ref{eq:1}) which satisfy certain precise requirements. 

In \cite{J4}, we study the consequence of the double shuffle relation of multiple polylogarithms on the coefficients of the power series expansion at $0$. In particular we define the notion of multiple harmonic value, the numbers 
$$ \har_{\mathcal{P}^{\mathbb{N}}}(n_{1},\ldots,n_{d}) =
\bigg( {(p^{\alpha})}^{n_{1}+\ldots+n_{d}} \sum_{0<m_{1}<\ldots<m_{d}<p^{\alpha}} \frac{1}{m_{1}^{n_{1}} \ldots m_{d}^{n_{d}}} \bigg)_{(p,\alpha)  \in  \mathcal{P}_{N} \times \mathbb{N}} \in \big( \prod_{p\in\mathcal{P}} \mathbb{Q}_{p} \big)^{\mathbb{N}} $$
where $\mathcal{P}$ is the set of prime numbers. We show that the double shuffle of multiple polylogarithms gave rise to a varianst of the double shuffle relations for these numbers, which we called the harmonic double shuffle relations. The termnology multiple harmonic values comes from multiple harmonic sums, the followng rational numbers :

$$ \sum_{0<m_{1}<\ldots<m_{d}<m} \frac{1}{m_{1}^{n_{1}} \cdots m_{d}^{n_{d}}} $$

More generally, in the study of $p$-adic multiple zeta values, we have seen that \emph{by taking power series expansions, algebraic relations can be transferred, from multiple polylogarithms to certain sequences multiple harmonic sums}, where multiple harmonic sums appear as coefficients of that power series expansion.

\subsection{Outline}

The goal of this work is to generalize the above idea. Instead of taking only power series expansions at $0$, we take power series expansion at all points at the same time. And instead of working on $\mathbb{P}^{1} - \{0,1,\infty\}$ or 
$\mathbb{P}^{1} - \{0,\mu_{N},\infty\}$ as done in those papers, we will work with the moduli space $\mathcal{M}_{0,l}$.

When we derive several times an iterated integral of algebraic functions, we obtain a linear combination of iterated integrals over the ring of algebraic functions. 

By Chen's theorem on iterated integrals \cite{Chen}, multiple polylogarithms are linearly independent over this ring. Thus, an algebraic relation among multiple polylogarithms, derived several times, gives a relation among algebraic functions.

As a result, we will transfer algebraic properties of multiple polylogarithms, not to multiple harmonic sums, but to algebraic functions over $\mathbb{Q}$.

In particular, let us start with the generic double shuffle relations (\ref{eq:generic1}) (\ref{eq:generic2}). Let us apply the operators of partial derivation with respect to each variable $z_{i}$. If we consider the purely algebraic term of the relation, for instance, it gives a relation among certain algebraic functions. 

By restricting to a subvariety defined by the equation ``the generic double shuffle derived a certain number of times equals the generic double shuffle'', we get :

\textbf{Theorem.} \emph{There exist an infinite sequence of explicit closed affine subvarieties $\mathcal{Y}_{0,n+3}^{(N)}$ of $\mathcal{M}_{0,n+3}$ ($N \in \mathbb{N}^{\ast}$, $n \in \mathbb{N}$) equipped with global algebraic functions that are solutions to the generic double shuffle equations.}

This project and the above result have been announced in a previous version of \cite{J6}. More details and consequences of the result will appear in the next version of this text. We know that the double shuffle equations (\ref{eq:first shuffle}) (\ref{eq:second shuffle}) can be obtained as limits of the generic double shuffle equations (\ref{eq:generic1}) (\ref{eq:generic2}). We will see in a subsequent work what can be obtained as limits of the specification of the generic double shuffle equations constructed in this paper. We will also study the periods arising from the above subvarieties.

\emph{Acknowledgments.} This work has been done at Universit\'{e} de Strasbourg, with support of Labex IRMIA, and  Universit\'{e} de Genève, with support of NCCR SwissMAP.

\section{The de Rham pro-unipotent fundamental groupoid of $\mathcal{M}_{0,n}$ and its localization}

\subsection{The de Rham pro-unipotent fundamental groupoid of $\mathcal{M}_{0,n}$}

The notion of de Rham pro-unipotent fundamental groupoid is introduced in \cite{Deligne}.
We describe the example of 
$$ \mathcal{M}_{0,L+3} = \{ (x_{1},\ldots,x_{L+3}) \in (\mathbb{P}^{1})^{L+3} \text{ } |\text{ } x_{i}\not= x_{j} \text{ for all i} \not= \text{j} \} / \PGL_{2} $$
which is isomorphic to 
$$ \mathcal{M}_{0,L+3} = \{(y_{1},y_{2},\ldots,y_{L}) \in (\mathbb{P}^{1} \setminus \{0,1,\infty\})^{L} \text{ }|\text{ for all i,j}, y_{i} \not= y_{j} \} $$

and can be written as $\overline{\mathcal{M}_{0,L+3}} - \partial \overline{\mathcal{M}_{0,L+3}}$ where $\overline{\mathcal{M}_{0,L+3}}$ is its Deligne-Mumford compactification and $\partial \overline{\mathcal{M}_{0,L+3}}$ is a normal crossings divisor.

The de Rham pro-unipotent fundamental groupoid of $X=\mathcal{M}_{0,L+3}$ is a groupoid in affine schemes over $\mathcal{M}_{0,L}$, whose base-points are points of $\mathcal{M}_{0,L}$ plus tangential base-points in the sense of \cite{Deligne}, \S15. By \S12.4 of \cite{Deligne} one also has a canonical base-point $\omega_{\dR}$. 
For any two base-points $x,y$, one has an affine scheme 
$\pi_{1}^{\un,\dR}(X,y,x)$ and for any three base-points $x,y,z$, one has a morphism of schemes, the groupoid multiplication $\pi_{1}^{\un,\dR}(X,z,y) \times \pi_{1}^{\un,\dR}(X,y,x) \rightarrow \pi_{1}^{\un,\dR}(X,z,x)$. This makes each 
$\pi_{1}^{\un,\dR}(X,x)= \pi_{1}^{\un,\dR}(X,x,x)$ into a group scheme and each $\pi_{1}^{\un,\dR}(X,y,x)$ into a bitorsor under $(\pi_{1}^{\un,\dR}(X,x),\pi_{1}^{\un,\dR}(X,y))$.

For any two base-points $x,y$, one has an isomorphism 
$\pi_{1}^{\un,\dR}(X,y,x) \simeq \pi_{1}^{\un,\dR}(X,\omega_{\dR})$ and these isomorphisms are compatible to the groupoid structure. Thus, describing $\pi_{1}^{\un,\dR}(X)$ is reduced to describing $\pi_{1}^{\un,\dR}(X,\omega_{\dR})$.

$\pi_{1}^{\un,\dR}(X,\omega_{\dR})$ is the exponential of the pro-nilpotent Lie algebra with generators $e_{ij}$, $1 \leq i\not= j \leq L+3$ and relations 
$e_{ij} = e_{ji}$, $[e_{ij},e_{ik}+e_{jk}] = 0$ for $i,j,k$ distinct, and $[e_{ij},e_{kl}]=0$ for $i,j,k,l$ distinct.

The bundle $\pi_{1}^{\un,\dR}(X,\omega_{\dR}) \times X$ carries the Knizhnik-Zamolodchikov connection, given by 
\begin{equation} \label{eq:nablaKZ M0,n} \nabla_{\KZ} : f \mapsto df - \sum_{1 \leq i<j \leq n+3} e_{ij} d\log(x_{i}-x_{j}) f ,
\end{equation}
and, in the cubic coordinates $c_{i}$ defined by $y_{i} = c_{i} \ldots c_{L}$, $(1 \leq i \leq L)$,
\begin{equation} \nabla_{\KZ} : f \mapsto df
- \bigg( \sum_{u=1}^{L} \frac{dc_{u}}{c_{u}} \sum_{u\leq i<j \leq L} e_{ij}
- \sum_{\substack{1\leq v \leq v' \leq L \\  2 \leq i \leq j }} \frac{d(c_{v} \ldots c_{v'})}{c_{v} \ldots c_{v'} -1} e_{v-1,v'}  
-  \sum_{\substack{1\leq v \leq v' \leq L \\ 1 \leq i \leq j}} \frac{d(c_{v} \ldots c_{v'})}{c_{v} \ldots c_{v'} -1} e_{v',n-1} \bigg) f.
\end{equation}

\subsection{The localization of de Rham pro-unipotent fundamental groupoid of $\mathcal{M}_{0,n}$}

Iterated integrals such as (\ref{eq:2}) or (\ref{eq:2Li}) are obtained by applying several times operators of the form 
$$ f \mapsto \int f\omega $$
i.e. multiplying by a differential form and integrating. We will use the term localization to takl about a settng in which we consider also the inverse of these operators at the same time. This setting the functions obtained by applying several times partial derivatives of multiple polylogarithms. We have met this object, implicitly and explcitly, in other papers : \cite{J2} with the localized multiple harmonic sums of \S4, and also \cite{J8} with the adjoint $p$-adic multiple zeta values at tuples of not-all-positive integers.

The localization of a not necessarily commutative ring $R$ with respect to a multiplicative subset $S$ is defined as the ring which represents the subfunctor of $\Hom(R,-)$ defined by the homomorphisms which map $S$ to units. The representability of this functor follows from its continuity and from the fact that it satisfies the solution set condition. Explicitly, it consists of sums of elements of the form $r_{1}s_{1}^{-1}r_{2}s_{2}^{-1}\ldots$ where $r_{i} \in R$ and $s_{i} \in S$.

If $z_{0}$, $z_{l+1}$, and $z_{l+2}$  denote respectively the marked point fixed at $0$, $1$, and $\infty$, then we associate in particular, for all $i \in \{1,\ldots,L\}$ :
$e_{i,0} \leftrightarrow \big( f \mapsto \int f \frac{dz_{i}}{z_{i}} \big)$, $e_{i,n+1} \leftrightarrow \big( f \mapsto \int f \frac{dz_{i}}{z_{i}-1} \big)$ 
\noindent where integration is with respect to the variable $z_{i}$. In particular, we have formal inverses of these operators ,
$e^{inv}_{i,0} \leftrightarrow \big( f \mapsto z_{i}\frac{\partial f}{\partial z_{i}} \big)$, $ e^{inv}_{i,n+1} \leftrightarrow \big(f \mapsto(z_{i}-1)  \frac{\partial f}{\partial z_{i}} \big)$, and thus by considering the difference between the two we obtain $\frac{\partial}{\partial z_{i}}= e^{inv}_{i,0} - e^{inv}_{i,n+1}$.

\section{Multiple polylogarithms and localized multiple polylogarithms}

Multiple polylogarithms, on the one hand englobe in some sense all homotopy-invariant iterated integrals over $\mathcal{M}_{0,n}$, and on the other hand are given by simple formulas.

\subsection{Review of Goncharov's multiple polylogarithms}

Here we review some of the basic analytic properties of multiple polylogarithms of \cite{Goncharov}. Hyperlogarithms (\cite{Goncharov}, \S2.1) are the following functions on $\mathcal{M}_{0,n}$ : 
$$ \I(a_{0};a_{1},\ldots,a_{m};a_{m+1}) = \int_{a_{0}\leq a_{1} \leq \ldots \leq a_{m} \leq a_{m+1}} \frac{dt_{1}}{t_{1}-a_{1}} \wedge \frac{dt_{2}}{t_{2}-a_{2}} \wedge \ldots \wedge \frac{dt_{m}}{t_{m}-a_{m}} $$

\noindent By affine change of variable one can reduce their study to the case where $a_{0} = 0$ and $a_{n+1} = 1$ and view this indeed as a function of $(a_{1},\ldots,a_{m})$. In particular one has (\cite{Goncharov}, \S2.1)
\begin{center} $\I(z_{0};z_{1},\ldots,z_{m};z_{m+1})= \I(0;z_{1}-z_{0},\ldots,z_{m}-z_{0};z_{m+1}-z_{0})$ \end{center}

One denotes the sequence $(z_{1},\ldots,z_{m})$ by distinguishing the $z_{i}$'s that are equal to $0$, assuming that $z_{1}\not=0$, as $(z_{1},\ldots,z_{m}) = (z_{i_{1}},\underbrace{0,\ldots,0}_{n_{1}-1},\ldots z_{i_{d}},\underbrace{0,\ldots,0}_{n_{d}-1})$.

The Knizhnik-Zamolodchikov differential equation is satisfied by hyperlogarithms, in the following sense (\cite{Goncharov}, \S2.2, Theorem 2.1) :
$$ dI(z_{0};z_{1},\ldots,z_{m};z_{m+1}) = \sum_{i=1}^{m} I(z_{0};z_{1},\ldots,\hat{z_{i}},\ldots,z_{m};z_{m+1}) \big( d\log(z_{i+1}-z_{i}) - d\log(z_{i-1} - z_{i}) \big) $$

There is a simple formula expressing these iterated integrals and multiple polylogarithms in terms of each other (\cite{Goncharov}, Theorem 2.2)

If $|z_{1}-z_{0}|$ is small enough we have
$$ \I(z_{0};a_{1},\ldots,a_{n},z_{1}) = \sum_{0<m_{1}<\ldots < m_{d}} 
\frac{\big(\frac{a_{i_{2}}-z_{0}}{a_{i_{1}}-z_{0}}\big)^{n_{1}} \ldots 
	\big(\frac{a_{i_{d}}-z_{0}}{a_{i_{d-1}}-z_{0}}\big)^{n_{d-1}}
	\big(\frac{z_{1}-z_{0}}{a_{i_{d}}-z_{0}}\big)^{n_{d}}}{m_{1}^{n_{1}} \ldots m_{d}^{n_{d}}} $$

\subsection{Localized multiple polylogarithms}

We generalize the notation
$\I(z_{0};z_{1},\ldots,\widehat{z_{i}},\ldots,z_{n};z_{n+1})$ :

\begin{Notation} Let $w= \{u_{1},\ldots,u_{r}\} \subset \{1,\ldots,n\}$ (with $u_{1}< \ldots < u_{r}$), and let  $w^{c} = \{v_{1},\ldots,v_{s}\} \subset \{1,\ldots,n\}$ the complement of $w$ in $\{1,\ldots,n\}$ (with $v_{1}< \ldots < v_{s}$). We adopt the four following abreviations for the iterated integral $\I(z_{0};\ldots,\widehat{z_{v_{1}}},\ldots,\widehat{z_{v_{s}}},\ldots;z_{n+1})$ :
$$\I(w) = \I(\widehat{w^{c}}) = \I(z_{u_{1}},\ldots,z_{u_{r}}) = \I(\widehat{z_{v_{1}}},\ldots,\widehat{z_{v_{s}}}) $$
\end{Notation}

\noindent Now let us take a permutation $\imath : \{1,\ldots,n\} \hookrightarrow \{1,\ldots,n\}$, as well as a map $\frak{n} : \{1,\ldots,n\} \rightarrow \mathbb{N}$. We denote by $i_{j} = i(j)$ for all $j$, and by $n_{j}=\frak{n}(j)$. We want to write a formula for $\partial^{n_{i_{k}}}_{z_{i_{k}}} \ldots \partial^{n_{i_{1}}}_{z_{i_{1}}} \I$, by induction on $k$ and on $\frak{n}$ with the lexicographical order on $\mathbb{N}^{n}$.

\begin{Proposition} We have 
\begin{equation} \label{eq: appearance of F} 
\partial^{d_{i_{k}}}_{z_{i_{k}}} \ldots \partial^{d_{i_{1}}}_{z_{i_{1}}} \I = \sum_{w \subset \{1,\ldots,n\}}
F^{(n_{i_{k}},\ldots,n_{i_{1}})}_{w,(z_{i_{k}},\ldots,z_{i_{1}})} \I(w) 
\end{equation}
\noindent where $F^{(n_{i_{k}},\ldots,n_{i_{1}})}_{w,(z_{i_{k}},\ldots,z_{i_{1}})} \in \mathbb{Q}(z_{1},\ldots,z_{n})$ is given inductively by, for $w = \{j_{1},\ldots,j_{r}\}$ with $j_{1}<\ldots<j_{r}$, and with $j_{0} = 0$, $j_{r+1}=n+1$ :
\begin{equation} \label{eq:rational KZ system} F_{w,(i_{k},\ldots,i_{1})}^{(n_{k}+1,\ldots,n_{1})} = \partial_{z_{i_{k}}}F_{w,(i_{k},\ldots,i_{1})}^{(n_{k},\ldots,n_{1})} + 
\sum_{u=0}^{r} \sum_{i=j_{u}+1}^{j_{u+1}-1} F_{w \cup \{i\},(i_{k},\ldots,i_{1})}^{(n_{k},\ldots,n_{1})} \big( \frac{1}{z_{j_{u}} - z_{i}} - \frac{1}{z_{j_{u+1}} - z_{i}} \big) 
\end{equation}
\end{Proposition}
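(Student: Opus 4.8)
The plan is to prove the identity (\ref{eq: appearance of F}), where the coefficient $F^{(n_{i_{k}},\ldots,n_{i_{1}})}_{w,(z_{i_{k}},\ldots,z_{i_{1}})}$ is \emph{defined} by the base case (no derivation) together with the recursion (\ref{eq:rational KZ system}), by the double induction announced just before the statement: an outer induction on $k$, the number of distinct variables in which one differentiates, and, for $k$ fixed, an inner induction on the multi-index $\frak{n}$ with the lexicographic order on $\mathbb{N}^{n}$. The base case is $\I=\sum_{w}F_{w}\,\I(w)$ with no derivation, which holds with $F_{\{1,\ldots,n\}}=1$ and all other $F_{w}=0$, since in the Notation above $\I=\I(\{1,\ldots,n\})$; by Chen's theorem \cite{Chen} the functions $\I(w)$ are linearly independent over $\mathbb{Q}(z_{1},\ldots,z_{n})$, so these coefficients, and inductively all the $F_{w}$, are uniquely determined by the identity. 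The recursion (\ref{eq:rational KZ system}) is well-posed: its right-hand side involves only the coefficients $F^{(n_{i_{k}},\ldots,n_{i_{1}})}_{w'}$, carrying one derivation fewer, which the induction has already produced; the reason the sum there runs over subsets $w\cup\{i\}$ larger than $w$ is that the Knizhnik--Zamolodchikov equation expresses $d\I(w\cup\{i\})$ in terms of the $\I(w')$ with $w'$ a proper subset.

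For the inductive step, assume (\ref{eq: appearance of F}) for $D=\partial^{n_{i_{k}}}_{z_{i_{k}}}\cdots\partial^{n_{i_{1}}}_{z_{i_{1}}}$, with all $F_{w}\in\mathbb{Q}(z_{1},\ldots,z_{n})$, and apply $\partial_{z_{i_{k}}}$ once more. By the Leibniz rule,
\[
\partial_{z_{i_{k}}}\,D\,\I \;=\; \sum_{w}\bigl(\partial_{z_{i_{k}}}F_{w}\bigr)\,\I(w)\;+\;\sum_{w}F_{w}\,\partial_{z_{i_{k}}}\I(w),
\]
and the first sum stays in $\mathbb{Q}(z_{1},\ldots,z_{n})$ since this field is stable under $\partial_{z_{i_{k}}}$. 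Into the second sum one feeds the Knizhnik--Zamolodchikov equation of \cite{Goncharov}, Theorem~2.1, recalled above, applied to $\I(w)=\I(z_{0};(z_{j})_{j\in w};z_{n+1})$ with the endpoints $z_{0}=0$ and $z_{n+1}=1$ held fixed: $d\I(w)$ is a combination of the $\I(w\setminus\{j\})$ with coefficients that are differences of two forms $d\log(z_{a}-z_{b})$, so its $dz_{i_{k}}$-component expresses $\partial_{z_{i_{k}}}\I(w)$ as a $\mathbb{Q}(z)$-linear combination of the $\I(w')$, $w'\subseteq w$; in particular $\partial_{z_{i_{k}}}\I(w)=0$ when $i_{k}\notin w$. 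Re-expanding, collecting for each fixed subset the total coefficient of $\I(w)$, and reading the simple fractions $\tfrac{1}{z_{a}-z_{b}}$ off the $d\log$'s, one gets a formula of the shape $F^{\mathrm{new}}_{w}=\partial_{z_{i_{k}}}F_{w}+\sum F_{w'}\cdot(\text{difference of simple fractions})$; the heart of the step is to verify that this is exactly (\ref{eq:rational KZ system}). Since every ingredient lies in $\mathbb{Q}(z_{1},\ldots,z_{n})$, so does $F^{\mathrm{new}}_{w}$, which closes the induction.

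I expect the main obstacle to be precisely this last identification. For $\I(w)$ with $w=\{j_{1}<\cdots<j_{r}\}$, and with the convention $j_{0}=0$, $j_{r+1}=n+1$, the $dz_{j_{c}}$-component of the Knizhnik--Zamolodchikov equation is a sum of at most three terms: a term $\I(w\setminus\{j_{c}\})\bigl(\tfrac{1}{z_{j_{c-1}}-z_{j_{c}}}-\tfrac{1}{z_{j_{c+1}}-z_{j_{c}}}\bigr)$, and — unless $j_{c}$ is the smallest (respectively largest) element of $w$ — a further term obtained by deleting from $w$ the immediate predecessor (respectively successor) of $j_{c}$. One must sort, as $w'$ runs over the subsets of the form $w\cup\{i\}$, which of these three families of terms contributes a given $\I(w)$, re-index the predecessor- and successor-deletion contributions so that together with the $j_{c}$-deletion ones they assemble into the single double sum of (\ref{eq:rational KZ system}) over ``gaps'' $u$ between consecutive elements of $w$ and interior indices $i$, and handle with care the degenerate cases where $i_{k}$, or the added index $i$, is adjacent to one of the fixed endpoints $z_{0}$ or $z_{n+1}$. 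Granting this reorganisation, the remaining checks — the Leibniz rule, stability of $\mathbb{Q}(z_{1},\ldots,z_{n})$ under each $\partial_{z_{i}}$, and well-foundedness of the double induction — are routine.
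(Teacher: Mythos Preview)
Your proposal is correct and follows essentially the same approach as the paper: induction on the number of derivations, with the inductive step driven by the Leibniz rule together with the Knizhnik--Zamolodchikov differential equation for hyperlogarithms. The paper's own proof is extremely terse---it simply states ``By induction from the fact that the KZ differential equation for hyperlogarithms amounts to the following formulas'' and then records the three partial-derivative formulas for $\partial\I/\partial z_{j}$; your write-up spells out the Leibniz split and the collection-of-terms argument that this sentence is implicitly invoking.

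One small remark: your appeal to Chen's theorem to guarantee uniqueness of the coefficients $F_{w}$ is unnecessary here. Since the $F_{w}$ are \emph{defined} by the base case and the recursion (\ref{eq:rational KZ system}), there is nothing to prove about uniqueness; the content of the proposition is only the identity (\ref{eq: appearance of F}), and for that the induction plus KZ suffices. The paper does not invoke Chen at this point (it is used later, in \S5).
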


\begin{proof} By induction from the fact that the KZ differential equation for hyperlogarithms amounts to the following formulas : for $2 \leq j \leq n-1$,  
	$$ \frac{\partial \I}{\partial z_{j}}= \frac{1}{z_{j-1}-z_{j}} \big( \I(\widehat{z_{j}}) - \I(\widehat{z_{j-1}}) \big) - 
	\frac{1}{z_{j+1}-z_{j}} \big( \I(\widehat{z_{j}}) - \I(\widehat{z_{j+1}}) \big)   $$
	$$ \frac{\partial \I}{\partial z_{1}} = \frac{1}{-z_{1}} \big( \I(\widehat{z_{1}}) \big) - 
	\frac{1}{z_{2}-z_{1}} \big( \I(\widehat{z_{1}}) - \I(\widehat{z_{2}}) \big)  $$
	$$ \frac{\partial \I}{\partial z_{n}} = \frac{1}{z_{n-1}-z_{n}} \big( \I(\widehat{z_{n}}) - \I(\widehat{z_{n-1}}) \big) - 
	\frac{1}{1-z_{n}} \big( \I(\widehat{z_{n}}) \big) $$
\end{proof}
	
This gives all of the maps $F$, because $ F_{w,(i_{k},\ldots,i_{1})}^{(0,n_{k-1}\ldots,n_{1})} =  F_{w,(i_{k-1},\ldots,i_{1})}^{(n_{k-1},\ldots,n_{1})}$.

\begin{Remark} It is possible to turn the recursive formula into a non-recursive one. This requires to introduce some combinatorial objects such as certain trees that serve for the indexation of the formulas.
\end{Remark}

\begin{Example} In weight $n=1$ :
\begin{center} $\partial_{a} \I(a) = - \big( \frac{1}{a - z_{0}} - \frac{1}{Z_{1} - z_{2}} \big) = - \frac{(z_{2}-z_{0})}{(z_{2}-a)(a-z_{0})}$ \end{center}
\begin{center} $\partial^{n_{1}}_{a} \I(a) = - \big[ \frac{(-1)^{n_{1}-1} (n_{1}-1)!}{(a - z_{0})^{n}} - \frac{(-1)^{n_{1}-1} (n_{1}-1)!}{(a - z_{2})^{n}} \big] =
(-1)^{n_{1}}(n_{1}-1)! \big[ \frac{(a-z_{2})^{n} - (a-z_{0})^{n} }{(Z_{1}-z_{0})^{n}(Z_{1}-z_{2})^{n}} \big]$ \end{center}
\end{Example}
\begin{Example} In weight $2$ :
	\newline (i) first order
	\begin{center} $\partial_{a}\I(a,b) =\big( \frac{1}{a-b} - \frac{1}{a-z_{0}} \big) \I(b)- \big( \frac{1}{a-b}\big) \I(a)$ 
	\end{center}
	\begin{center} $\partial_{b} \I(a,b) = \big( \frac{1}{b-z_{1}} - \frac{1}{b-a} \big) \I(a) + \frac{1}{b-a} \I(b)$
	\end{center}
	\noindent (ii) second order :
	\begin{center}
		$\partial^{2}_{a}\I(a,b) = \big( \frac{1}{(a-z_{0})^{2}} - \frac{1}{(a-b)^{2}} \big) \I(b) + \frac{1}{(a-b)^{2}}\I(a) - \frac{(z_{1}-z_{0})}{(b-a)(z_{1}-a)(a-z_{0})}$ \end{center}
	\begin{center}
		$\partial^{2}_{b}\I(a,b) = \big( \frac{1}{(a-b)^{2}} - \frac{1}{(a-z_{1})^{2}} \big) \I(a) - \frac{1}{(b-a)^{2}}\I(b) - \frac{(z_{1}-z_{0})}{(b-a)(z_{1}-b)(b-z_{0})}$ \end{center}
	\begin{center}
		$\partial_{a}\partial_{b} \I(a,b) =\frac{1}{(b-a)^{2}} \big( \I(a) - \I(b) \big) - \frac{z_{1}-z_{0}}{(z_{1}-b)(b-a)(a-z_{0})}$ \end{center}
\end{Example}

\begin{Example} In weight $3$ : 
	\newline (i) first order 
	\begin{center} $\partial_{a} \I(a,b,c) = \big(\frac{1}{a-b} - \frac{1}{a - z_{0}} \big) \I(b,c) - \frac{1}{a-b} \I(a,c)$ 
	\end{center}
	\begin{center} $\partial_{c} \I(a,b,c) = \big( \frac{1}{c-z_{1}} - \frac{1}{c-b}\big) \I(a,b) + \frac{1}{c-b}\I(a,c)$ \end{center}
	\begin{center} $\partial_{b} \I(a,b,c)= \big( \frac{1}{a-b} - \frac{1}{c-b} \big) \I(a,c) + \frac{1}{b-a} \I(b,c) + \frac{1}{c-b} \I(a,b)$ \end{center}
	\noindent (ii) second order 
\begin{center} $\partial_{a}\partial_{b} \I(a,b,c) =
\frac{1}{(a-b)^{2}}\big( \I(b,c) - \I(a,c)\big) 
+ \frac{1}{(c-b)(b-a)(a-z_{0})} \big( (c-z_{0})\I(c) -(b-z_{0})\I(b) \big)$ 
\end{center}
\begin{center} $\partial_{b}\partial_{c} \I(a,b,c) = \frac{1}{(c-b)^{2}}\big( \I(a,c)- \I(a,b) \big) + \frac{1}{(b-a)(c-b)(c-z_{1})} \big( (z_{1}-b)\I(b) - (z_{1}-a) \I(a) \big)$ \end{center}
\begin{center} $\partial_{a}\partial_{c} \I(a,b,c) = -\frac{(z_{1}-a)}{(z_{1}-c)(c-a)(b-a)}\I(a) + \frac{(z_{1}-b)(b-z_{0})}{(z_{1}-c)(c-b)(b-a)(a-z_{0})}\I(b) + \frac{(c-z_{0})}{(c-b)(c-a)(z_{0}-a)}\I(c)$ \end{center}
	\noindent (iii) third order
\begin{center} $\partial_{a}\partial_{b}\partial_{c} \I(a,b,c) =
		- \frac{(z_{1}-z_{0})}{(z_{1}-c)(c-b)(b-a)(a-z_{0})} 
		- \big( \frac{(z_{1}-a)}{(c-z_{1})(c-a)(a-b)^{2}} \big)\I(a)
		+ \big( \frac{(z_{1}-b)}{(a-b)^{2}(c-z_{1})(c-b)} - \frac{(b-z_{0})}{(c-b)^{2}(b-a)(a-z_{0})} \big) \I(b)
		+ \big( \frac{z_{0}-c}{(c-b)^{2}(b-a)(a-z_{0})}\big) \I(c)$ \end{center}
\end{Example}

\section{Algebraic relations among localized multiple polylogarithms}

\subsection{Review of the generic double shuffle relations for multiple polylogarithms}

Let $A= (a_{i})_{i \in I}$ be any alphabet. The shuffle product on the $\mathbb{Q}$-monoid of words $\mathbb{Q}\langle A\rangle$ over $A$ is the bilinear map $\mathbb{Q}\langle A \rangle \times \mathbb{Q}\langle A \rangle \rightarrow \mathbb{Q}\langle A \rangle$ defined by
	\begin{center} $(a_{i_{1}}\ldots a_{i_{l}})\text{ }\sh\text{ }(a_{i_{l+1}} \ldots a_{i_{l+l'}}) =
		\sum_{\substack{\sigma \text{ permutation of }\{1,\ldots,l+l'\} \\\text{s.t. } \sigma(1)<\ldots<\sigma(l)\\ \text{and } \sigma(l+l)<\ldots<\sigma(l+l')}} 
		a_{i_{\sigma^{-1}(1)}} \ldots a_{i_{\sigma^{-1}(l+l')}}$ \end{center}

\noindent It follows from the definition of multiple polylogarithms that we have the shuffle relation :
	$$ I(a_{0};a_{1},\ldots,a_{n};a_{n+1})I(a_{0};b_{1},\ldots,b_{m};a_{n+1}) = 
	I(a_{0}; a_{1}\ldots a_{n}\text{ }\sh\text{ }b_{1}\ldots b_{m} ; a_{n+1}) $$

Let $Z$ be a finite subset of $\mathbb{P}^{1}(\mathbb{C})$ containing $\{0,1,\infty\}$. Let $Y_{Z}$ be the alphabet $\{ y_{n,x}\text{ }|\text{ }n \in \mathbb{N}^{\ast}, x \in Z- \{infty\}\}$. We identify words on $Y_{Z}$ to the words $w$ on $e_{Z}$ such that $\tilde{\partial}_{e_{0}}(w)= 0$ via the correspondence $y_{n_{1},x_{1}} \ldots y_{n_{d},x_{d}} \leftrightarrow e_{0}^{n_{d}-1}e_{x_{d}} \ldots e_{0}^{n_{1}-1}e_{x_{1}}$.

Let $\mathcal{O}^{\ast,Z}$ be the $\mathbb{Q}$-vector space freely generated by words over $Y_{Z}$, including the empty word $1$.

If $Z,Z'$ are two finite subsets of $\mathbb{P}^{1}(\mathbb{C})$ containing $\{0,1,\infty\}$, what we will denote by $ZZ'$ is simply their product set, i.e. $ZZ' = \{zz',\text{ } z \in \mathbb{Z}, z' \in \mathbb{Z'}\}$.

The shuffle product is a map 
$\mathcal{O}^{\ast,Z} \times \mathcal{O}^{\ast,Z'} \rightarrow \mathcal{O}^{\ast,ZZ'}$. For convenience we extend slightly $\mathcal{O}^{\ast,Z}$ and $\mathcal{O}^{\ast,Z'}$. Let $\mathcal{O}^{\ast,Z}_{ext}$ be the vector space freely generated by the empty sequence and the sequences $( (n_{i})_{d};(x_{i})_{d+1} ) = ( x_{d+1},n_{d},\ldots,x_{2},n_{1},x_{1})$ with $n_{i}$ positive integers and $x_{i}$ in $Z - \{0,\infty\}$. Let the injective linear map $i : \mathcal{O}^{\ast,Z} \hookrightarrow \mathcal{O}^{\ast,Z}_{ext}$ defined by $y_{n_{d},x_{d}} \ldots y_{n_{1},x_{1}} \mapsto (1,n_{d},x_{d}\ldots,n_{1},x_{1})$ and let 
$r : \mathcal{O}^{\ast,Z}_{ext} \rightarrow \mathcal{O}^{\ast,Z}$ be defined by
$(x_{d+1},n_{d},x_{d}\ldots,n_{1},x_{1}) \mapsto (1,n_{d},x_{d}/x_{d+1},\ldots,n_{1},x_{1}/x_{d+1})$.

The generalized shuffle product $\ast : \mathcal{O}^{\ast,Z}_{ext} \times \mathcal{O}^{\ast,Z'}_{ext}  \rightarrow \mathcal{O}^{\ast,ZZ'}_{ext}$ is the unique bilinear map given on couples of words as follows. 

A quasi-shuffle element of $\big((n_{1},\ldots,n_{d}),(n'_{1},\ldots,n'_{d'})\big)$
is a term of $\big((n_{1},\ldots,n_{d}) \ast (n'_{1},\ldots,n'_{d'})\big)$ where $\ast$ is as in the usual sense on $\mathbb{P}^{1} - \{0,1,\infty\}$.

For two sequences :
$w = (x_{i_{d+1}},n_{d},\ldots,x_{i_{2}},n_{1},x_{i_{1}})$,
$w' = (x_{j_{d'+1}},t_{d'},\ldots,x_{j_{2}},t_{1},x_{j_{1}})$, the quasi-shuffle product $w \ast w'$ is the sum of all the quasi-shuffle elements of (w,w'), where, for each quasi-shuffle element $(u_{d''},\ldots,u_{1})$ of 
$\big((n_{1},\ldots,n_{d}),(n'_{1},\ldots,n'_{d'})\big)$, one has e quasi-shuffle element  $(z_{i_{a_{d''+1}}}z_{j_{b_{d''+1}}},u_{d''},\ldots,z_{i_{a_{2}}}z_{j_{b_{2}}},u_{1},z_{i_{a_{1}}}z_{j_{b_{1}}})$ of $(w,w')$ defined by induction as follows :
\newline (i) $a_{1} = b_{1} = 1$
\newline (ii) for $i \in \{1,\ldots,d''-1\}$ : $\left\{ \begin{array}{l}
\text{ if }u_{i}=n_{l},\text{ } 1 \leq l \leq d,\text{ then }
 (a_{i+1},b_{i+1}) = (a_{i}+1 ,b_{i})
\\ \text{ if }u_{i}=n'_{l'},1 \leq l' \leq d',\text{ then }
 (a_{i+1},b_{i+1}) = (a_{i}, b_{i}+1)
\\ \text{ if }u_{i}=n_{l}+n'_{l'},1 \leq l \leq d,1 \leq l' \leq d',\text{ then }
 (a_{i+1},b_{i+1}) = (a_{i}+1,b_{i}+1) 
\end{array}\right.$.

It follows from the power series expansion of multiple polylogarithms that we have, for all indices $w,w'$,

$$ \Li(w)\Li(w') = \Li(w \ast w') $$

\subsection{Application to localized multiple polylogarithms}

We now see the consequences of generic double shuffle relations on localized multiple polylogarithms. We will add in the next versions more explicit versions of these equations as well as similar results about the homographical transformations. We note that some related results, on iterated integrals on $\mathbb{P}^{1} - \{0,1,z,\infty\}$ (without the iteration of the derivation) can be found in \cite{Hirose Sato}.

For simplicitly, we restrict to the case in which we apply to iterated integrals only operators of partial derivation with respect to a variable, and to which each operator on each variable $z_{i}$ is applied the same number of times. Of course, a more general construction can be done.

\begin{Notation} Let $\partial^{d}_{all\text{ }z_{i}}$ 
be $\frac{\partial}{\partial z_{1}} \ldots \frac{\partial}{\partial z_{L}}$ applied $d$ times, to a linear combination of iterated integrals which involves the variables $z_{i}$. (We do not include the variables $z_{0}$ and $z_{m+1}$).
\end{Notation}

By derivating several times the shuffle relation we obtain the following :

\begin{Proposition} We have, for all $d \in \mathbb{N}^{\ast}$ :
$$ \partial^{d}_{all\text{ }a_{i}}\I(a_{0};a_{1},\ldots,a_{n};a_{n+1})\text{ }\text{ }
\partial^{d}_{all\text{ }b_{i}}\I(a_{0};b_{1},\ldots,b_{m};a_{n+1}) = $$
$$ \partial^{d}_{all\text{ }a_{i},b_{i}}\I(a_{0};(a_{1},\ldots,a_{n}) \sh (b_{1},\ldots,b_{m});a_{n+1}) $$ 
\end{Proposition}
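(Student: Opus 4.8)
The plan is to obtain the identity by differentiating term-by-term the shuffle relation for hyperlogarithms recalled above, namely $\I(a_{0};a_{1},\ldots,a_{n};a_{n+1})\,\I(a_{0};b_{1},\ldots,b_{m};a_{n+1}) = \I(a_{0};(a_{1},\ldots,a_{n})\sh(b_{1},\ldots,b_{m});a_{n+1})$, viewed as an identity of (multivalued) analytic functions of the variables $a_{0},a_{1},\ldots,a_{n},b_{1},\ldots,b_{m},a_{n+1}$ on the open set where both sides are defined. First I would apply the differential operator $\partial^{d}_{all\text{ }a_{i},b_{i}}$ to both sides. On the right-hand side there is nothing further to do: it yields precisely $\partial^{d}_{all\text{ }a_{i},b_{i}}\I(a_{0};(a_{1},\ldots,a_{n})\sh(b_{1},\ldots,b_{m});a_{n+1})$, which is the right-hand side of the claimed equality. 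So the content of the argument is to evaluate $\partial^{d}_{all\text{ }a_{i},b_{i}}$ applied to the product on the left.

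For that, the key point is that the operator separates. Since the partial derivations $\partial_{a_{1}},\ldots,\partial_{a_{n}},\partial_{b_{1}},\ldots,\partial_{b_{m}}$ pairwise commute, one has, as differential operators, $\partial^{d}_{all\text{ }a_{i},b_{i}} = \partial^{d}_{all\text{ }a_{i}}\circ\partial^{d}_{all\text{ }b_{i}}$. Moreover $\I(a_{0};a_{1},\ldots,a_{n};a_{n+1})$ involves none of the variables $b_{j}$, and $\I(a_{0};b_{1},\ldots,b_{m};a_{n+1})$ involves none of the variables $a_{i}$. Therefore, when $\partial^{d}_{all\text{ }b_{i}}$ is applied to the product, Leibniz's rule degenerates and only the term in which all the derivatives fall on the second factor survives, producing $\I(a_{0};a_{1},\ldots,a_{n};a_{n+1})\,\partial^{d}_{all\text{ }b_{i}}\I(a_{0};b_{1},\ldots,b_{m};a_{n+1})$. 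The second factor here is still a function of $a_{0},b_{1},\ldots,b_{m},a_{n+1}$ only, so applying $\partial^{d}_{all\text{ }a_{i}}$ and degenerating Leibniz's rule once more gives $\partial^{d}_{all\text{ }a_{i}}\I(a_{0};a_{1},\ldots,a_{n};a_{n+1})\cdot\partial^{d}_{all\text{ }b_{i}}\I(a_{0};b_{1},\ldots,b_{m};a_{n+1})$. Combining the two computations yields the Proposition.

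I do not expect a genuine obstacle here: once the notation is unwound this is essentially a one-line computation, and the only points deserving care are bookkeeping ones. One must keep in mind that, by convention, $\partial^{d}_{all\text{ }a_{i},b_{i}}$ differentiates with respect to every $a_{i}$ ($1\le i\le n$) and every $b_{j}$ ($1\le j\le m$) but not with respect to $a_{0}$ or $a_{n+1}$, and that iterating the derivations $d$ times does not spoil the separation of variables, since differentiating $\I(a_{0};a_{1},\ldots,a_{n};a_{n+1})$ in the $a_{i}$'s again yields a $\mathbb{Q}(a_{1},\ldots,a_{n})$-linear combination of hyperlogarithms in $a_{1},\ldots,a_{n}$ (as given by the recursion for the maps $F$ established above), hence a function still independent of the $b_{j}$'s. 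I would also note that the argument applies verbatim if the two sides are differentiated a different number of times: replacing $\partial^{d}_{all\text{ }b_{i}}$ by $\partial^{d'}_{all\text{ }b_{i}}$ throughout gives $\partial^{d}_{all\text{ }a_{i}}\I(a_{0};a_{1},\ldots,a_{n};a_{n+1})\cdot\partial^{d'}_{all\text{ }b_{i}}\I(a_{0};b_{1},\ldots,b_{m};a_{n+1}) = \partial^{d}_{all\text{ }a_{i}}\partial^{d'}_{all\text{ }b_{i}}\I(a_{0};(a_{1},\ldots,a_{n})\sh(b_{1},\ldots,b_{m});a_{n+1})$, and each resulting term can, if wanted, be written out explicitly by inserting the formula for the $F$'s.
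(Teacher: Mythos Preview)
Your proof is correct and is exactly the argument the paper has in mind: the proposition is introduced in the paper simply by ``By derivating several times the shuffle relation we obtain the following'', and what you have written is precisely the unwinding of that sentence, namely applying $\partial^{d}_{all\ a_{i},b_{i}}$ to both sides of the shuffle identity and using that the $a_{i}$'s and $b_{j}$'s are disjoint sets of variables so that Leibniz's rule degenerates on the product. There is nothing to add.
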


\begin{Example} weight $(1,1)$. The relation
\begin{center} $\partial_{a}\I(a)\partial_{b}\I(b) =  \partial_{a}\partial_{b}\I(a,b) + \partial_{a}\partial_{b}\I(b,a)  $ \end{center}
\noindent amounts to, by the cancellation on the non-purely rational terms :
\begin{center} $\frac{(z_{1}-z_{0})}{(z_{1}-a)(a-z_{0})} \frac{(z_{1}-z_{0})}{(z_{1}-b)(b-z_{0})} = - \frac{(z_{1}-z_{0})}{(z_{1}-b)(b-a)(a-z_{0})} - \frac{(z_{1}-z_{0})}{(z_{1}-a)(a-b)(b-z_{0})}$ \end{center}
which can be proven directly.
\end{Example}

\begin{Example} weight $(1,2)$. A similar description can be given for the relation :
\begin{center} $\partial_{a}\I(a)\partial_{b,c}\I(b,c) =  \partial_{a}\partial_{b}\I(a,b) + \partial_{a}\partial_{b}\I(b,a)  $ \end{center}
\end{Example}

By derivating several times the quasi-shuffle relation we obtain a similar equation.

\begin{Proposition} We have, for all $d \in \mathbb{N}^{\ast}$ :
	$$ \partial^{d}_{all}\I(a_{0};a_{1},\ldots,a_{n};a_{n+1})\text{ }\text{ }
	\partial^{d}_{all}\I(a_{0};b_{1},\ldots,b_{m};a_{n+1}) = $$
	$$ \partial^{all}\I(a_{0};(a_{1},\ldots,a_{n}) \sh (b_{1},\ldots,b_{m});a_{n+1}) $$ 
\end{Proposition}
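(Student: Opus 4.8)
The plan is to deduce the identity from the shuffle relation for hyperlogarithms recalled in \S2.1, namely
\[
\I(a_{0};a_{1},\ldots,a_{n};a_{n+1})\,\I(a_{0};b_{1},\ldots,b_{m};a_{n+1})
= \I\big(a_{0}; (a_{1},\ldots,a_{n})\,\sh\,(b_{1},\ldots,b_{m}); a_{n+1}\big),
\]
viewed as an identity of analytic functions of the point $(a_{1},\ldots,a_{n},b_{1},\ldots,b_{m})$, the base-points $a_{0}$ and $a_{n+1}$ being held fixed. First I would apply the operator $\partial^{d}_{all}$, which by the defining Notation equals $\prod_{i=1}^{n}\partial_{a_{i}}^{d}\,\prod_{j=1}^{m}\partial_{b_{j}}^{d}$ (the partial derivations commuting), to both members; since the two sides agree as functions, so do their images under this fixed differential operator.

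The key step is the factorization of the resulting left-hand side. The first factor $\I(a_{0};a_{1},\ldots,a_{n};a_{n+1})$ depends only on the variables $a_{1},\ldots,a_{n}$, while the second factor $\I(a_{0};b_{1},\ldots,b_{m};a_{n+1})$ depends only on $b_{1},\ldots,b_{m}$; neither depends on the variables of the other. Hence each $\partial_{b_{j}}$ annihilates the first factor and each $\partial_{a_{i}}$ annihilates the second, so that in the Leibniz expansion of $\prod_{i}\partial_{a_{i}}^{d}\,\prod_{j}\partial_{b_{j}}^{d}$ applied to the product, every term distributing a derivative onto the ``wrong'' factor vanishes. Only the term in which all the $\partial_{a_{i}}$ act on the first factor and all the $\partial_{b_{j}}$ on the second survives, giving
\[
\Big(\prod_{i}\partial_{a_{i}}^{d}\,\I(a_{0};a_{1},\ldots,a_{n};a_{n+1})\Big)
\Big(\prod_{j}\partial_{b_{j}}^{d}\,\I(a_{0};b_{1},\ldots,b_{m};a_{n+1})\Big),
\]
which is exactly $\partial^{d}_{all}\I(a_{0};a_{1},\ldots,a_{n};a_{n+1})\,\partial^{d}_{all}\I(a_{0};b_{1},\ldots,b_{m};a_{n+1})$. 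On the right-hand side no manipulation is needed: $\partial^{d}_{all}$ applied to the single hyperlogarithm $\I\big(a_{0}; (a_{1},\ldots,a_{n})\,\sh\,(b_{1},\ldots,b_{m}); a_{n+1}\big)$ is, by definition, the right member of the claimed equation.

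I do not expect a genuine obstacle: the argument rests entirely on the disjointness of the two groups of variables and on the fact that partial derivation acts termwise on these honest functions. The only point deserving a word of care is that the shuffle relation is an identity of a priori multivalued functions; I would handle this either by restricting to a simply-connected domain on which single-valued branches are chosen, or by reading the relation and its derivatives inside the differential ring of iterated integrals governed by the KZ equation of \S2.1, where $\partial^{d}_{all}$ is the formal iterate of the derivations computed in the Proposition of that section. With this understood, differentiating both sides is legitimate and the factorization step completes the proof.
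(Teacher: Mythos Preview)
Your argument is correct and is precisely the content of the paper's one-line justification (``by derivating several times the relation''), spelled out via the Leibniz rule together with the disjointness of the two groups of variables on the left-hand side. One side remark: in the paper this Proposition is introduced as the quasi-shuffle analogue, so the symbol $\sh$ in the display is evidently a typo for $\ast$; your factorization step is unaffected, since the left-hand product $\I(a_{0};a_{1},\ldots,a_{n};a_{n+1})\,\I(a_{0};b_{1},\ldots,b_{m};a_{n+1})$ is the same in either relation.
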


\begin{Example} (in weight $1 \times 1$) The quasi-shuffle relation is 
\begin{center} 
$\sum_{0<m_{1}} \frac{(z/a)^{m_{1}}}{m_{1}^{n_{1}}}\sum_{0<m_{2}}\frac{(z/b)^{m_{2}}}{m_{2}^{n_{2}}}
= \sum_{0<m} \frac{(z^{2}/ab)^{m}}{m^{n_{1}+n_{2}}}
+ \sum_{0<m_{1}<m_{2}}\frac{(z/a)^{m_{1}}(z/b)^{m_{2}}}{m_{1}^{n_{1}}m_{2}^{n_{2}}}
+ \sum_{0<m_{1}<m_{2}}\frac{(z/b)^{m_{1}}(z/a)^{m_{2}}}{m_{1}^{n_{1}}m_{2}^{n_{2}}}$ 
\end{center}
\begin{center}
$= \sum_{0<m} \frac{(z^{2}/ab)^{n}}{m^{n_{1}+n_{2}}}
+ \sum_{0<m_{1}<m_{2}}\frac{(zb/ab)^{m_{1}}(z^{2}/bz)^{m_{2}}}{m_{1}^{n_{1}}m_{2}^{n_{2}}}
+ \sum_{0<m_{1}<m_{2}}\frac{(za/ba)^{m_{1}}(z^{2}/za)^{m_{2}}}{m_{1}^{n_{1}}m_{2}^{n_{2}}}$
\end{center}
\noindent in a summary :
\begin{center} 
$ \Li( \begin{array}{c} 0,a,z \\ n_{1} \end{array}) \Li( \begin{array}{c} 0,b,z \\ n_{2} \end{array} )= 
\Li( \begin{array}{c} 0,ab,z^{2} \\ n_{1}+n_{2} \end{array}) + 
\Li( \begin{array}{c} 0,ab,zb,z^{2} \\ n_{1},n_{2} \end{array}) +
\Li( \begin{array}{c} 0,ab,za,z^{2} \\ n_{2},n_{1} \end{array})$
\end{center}
\noindent We take $n_{1}=n_{2}=1$.
\begin{center}
$\I( 0,a,z ) \I( 0,b,z ) = \I( 0,ab,0,z^{2}) + \I( 0,ab,zb,z^{2}) +\I( 0,ab,za,z^{2})$
\end{center}
\noindent We apply $\partial_{a}\partial_{b}$. The left-hand side, for example, maps by $\partial_{a}\partial_{b}$ to $\big( \frac{1}{z-a} - \frac{1}{a} \big)\big( \frac{1}{z-b} - \frac{1}{b} \big)$.
\end{Example}

\section{Transfer of algebraic relations among localized multiple polylogarithms to algebraic functions}

\subsection{Definition}

Localized multiple polylogarithms arise as linear combinations of iterated integrals whose coefficients are rationals fractions over $\mathbb{Q}$.

\begin{Definition} We call $I_{N}^{\text{rat}}(a_{0};a_{1},\ldots,a_{n};a_{n})$ the purely algebraic term of that linear combination for $\partial^{N}_{all}I(a_{0};a_{1},\ldots,a_{n};a_{n+1})$, i.e. the coefficient of the constant iterated integral 1 (the iterated integral of the empty sequence of differential forms) in the linear combination.
\end{Definition}

This is a rational fraction over $\mathbb{Q}$. Of course we can also consider the other terms.

\begin{Proposition} $I_{N}^{\text{rat}}(a_{0};a_{1},\ldots,a_{n};a_{n})$ satisfies a variant of the generic double shuffle relations.
\end{Proposition}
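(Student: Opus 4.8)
The plan is to start from the two generic double shuffle relations for hyperlogarithms recalled in \S3 --- the shuffle relation $\I(a_{0};a_{1},\ldots,a_{n};a_{n+1})\I(a_{0};b_{1},\ldots,b_{m};a_{n+1}) = \I(a_{0};(a_{1}\ldots a_{n})\text{ }\sh\text{ }(b_{1}\ldots b_{m});a_{n+1})$ and its quasi-shuffle counterpart $\Li(w)\Li(w')=\Li(w\ast w')$ --- apply $\partial^{N}_{all}$ to both sides, rewrite everything as a linear combination of hyperlogarithms with coefficients in $\mathbb{Q}(z_{1},\ldots,z_{n})$, and then invoke Chen's linear independence to read off the coefficient of the empty iterated integral $1$. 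Recall from the Proposition containing \eqref{eq: appearance of F} that for any hyperlogarithm $\I(a_{0};c_{1},\ldots,c_{k};a_{k+1})$ in free variables one has $\partial^{N}_{all}\I = \sum_{w\subseteq\{1,\ldots,k\}}F^{(N,\ldots,N)}_{w,(c_{i})}\,\I(w)$, and that by definition $I_{N}^{\text{rat}}(a_{0};c_{1},\ldots,c_{k};a_{k+1}) = F^{(N,\ldots,N)}_{\emptyset,(c_{i})}$ is the coefficient of $\I(\emptyset)=1$.

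For the shuffle half, I would combine this with the derivated shuffle relation (the Proposition asserting $\partial^{N}_{all\ a_{i}}\I(a_{0};\vec{a};a_{n+1})\cdot\partial^{N}_{all\ b_{i}}\I(a_{0};\vec{b};a_{n+1}) = \partial^{N}_{all\ a_{i},b_{i}}\I(a_{0};\vec{a}\text{ }\sh\text{ }\vec{b};a_{n+1})$). Writing $\I_{a}(w),\I_{b}(w')$ for the sub-hyperlogarithms (obtained by deleting letters, keeping the base points $a_{0},a_{n+1}$), its left-hand side becomes $\big(\sum_{w}F^{a}_{w}\I_{a}(w)\big)\big(\sum_{w'}F^{b}_{w'}\I_{b}(w')\big) = \sum_{w,w'}F^{a}_{w}F^{b}_{w'}\,\I_{a}(w)\I_{b}(w')$, where each product $\I_{a}(w)\I_{b}(w')$ is, again by the shuffle relation for hyperlogarithms (the base points being the same), a $\mathbb{Q}$-linear combination of hyperlogarithms of weight $|w|+|w'|$; the right-hand side is, after \eqref{eq: appearance of F}, already of that form. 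By Chen's theorem the hyperlogarithms occurring are linearly independent over the ring of algebraic functions, so the coefficients of $\I(\emptyset)=1$ on the two sides must coincide. On the left, only the summand $w=w'=\emptyset$ contributes to that coefficient --- every $\I_{a}(w)\I_{b}(w')$ with $(w,w')\neq(\emptyset,\emptyset)$ expands into hyperlogarithms of positive weight and so has no $1$-component --- giving $I_{N}^{\text{rat}}(a_{0};\vec{a};a_{n+1})\,I_{N}^{\text{rat}}(a_{0};\vec{b};a_{n+1})$; on the right one gets $\sum_{u\in(a_{1}\ldots a_{n})\text{ }\sh\text{ }(b_{1}\ldots b_{m})}I_{N}^{\text{rat}}(a_{0};u;a_{n+1})$. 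Equating them is the shuffle half of the variant.

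The quasi-shuffle half is obtained the same way, starting from $\Li(w)\Li(w')=\Li(w\ast w')$ in its $\I$-presentation with the associated $Z$, $ZZ'$ structure, and from the corresponding derivated quasi-shuffle relation. The only additional feature is that the letters of the hyperlogarithms on the right-hand side of $\ast$ are monomials in the free variables (e.g. $ab$, $zb$, $za$ for weight $1\times 1$), so that applying $\partial^{N}_{all}$ --- which by convention differentiates only with respect to the free variables $a_{i},b_{i}$, not $z$, $z_{0}$, $z_{m+1}$ --- means running the recursion \eqref{eq:rational KZ system} through these substitutions together with the chain rule. After that, the same extraction of the coefficient of $1$ via Chen's theorem produces a polynomial identity among the rational fractions $I_{N}^{\text{rat}}$ evaluated at such monomial arguments, which is the quasi-shuffle half of the variant.

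I expect the main obstacle to be precisely this bookkeeping on the quasi-shuffle side: one must check that the recursion \eqref{eq:rational KZ system}, stated for the independent coordinates $z_{i}$, is correctly transported through the substitutions $z_{i}\mapsto$ (monomial in $a_{j},b_{j}$) produced by $\ast$, and that all the rational fractions involved lie in a single field $\mathbb{Q}(z_{1},\ldots,z_{M})$ over which Chen's independence is available, i.e. that we are genuinely on one moduli space $\mathcal{M}_{0,\bullet}$. Once the setup is in place there is no analytic input left: the separation of the purely rational term is forced by Chen's theorem, and the shuffle and quasi-shuffle combinatorics of the $I_{N}^{\text{rat}}$'s are inherited termwise from those of the hyperlogarithms.
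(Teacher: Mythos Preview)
Your proposal is correct and follows essentially the same approach as the paper: apply $\partial^{N}_{\mathrm{all}}$ to the generic double shuffle relations (the paper's Propositions in \S4.2), expand via \eqref{eq: appearance of F}, and invoke Chen's theorem on the linear independence of multiple polylogarithms over algebraic functions on $\mathcal{M}_{0,L+3}$ to isolate the coefficient of the empty iterated integral. The paper's own proof is a two-line sketch of exactly this; your version simply unpacks the mechanism and flags the bookkeeping on the quasi-shuffle side (the monomial substitutions and the chain rule) that the paper leaves implicit.
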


\begin{proof} We consider the variant of the generic double shuffle equations for localized multiple polylogarithms described in the previous paragraph, and Chen's theorem \cite{Chen} which implies that multiple polylogarithms are linearly independent over algebraic functions on $\mathcal{M}_{0,L+3}$. 
\end{proof}

\begin{Definition} Let $\mathcal{Y}_{L+3}^{(N)}$ be the subvariety of the affine variety $\mathcal{M}_{0,L+3}$ defined by the equation saying that the variant of the generic double shuffle relation satisfied by  $I_{N}^{\text{rat}}(a_{0};a_{1},\ldots,a_{m};a_{m+1})$ is equal to the generic double shuffle relation.
\end{Definition}

By the previous proposition, on this subvariety, one has explicit rational fractions over $\mathbb{Q}$ which satisfy the generic double shuffle relations.

\begin{Remark} We have restricted to the operators $\partial^{N}_{z_{m}} \ldots \partial^{N}_{z_{1}}$, but we can do a similar construction with the operators $\partial^{N_{m}}_{z_{m}} \ldots \partial^{N_{1}}_{z_{1}}$, with $N_{1},\ldots,N_{n}$ not necessarily equal to each other.
\end{Remark}

In our study of $p$-adic multiple zeta values, we obtain a variant of the motivic Galois theory of $p$-adic multiple zeta values, which contains certain sums of series (multiple harmonic values) are studied as if they were periods in a certain sense. Here we propose to go towards a variant of the motivic Galois theory of multiple polylogarithms which gives a special role to certain algebraic functions.


\begin{thebibliography}{50}
\bibitem[C]{Chen} K. T. Chen - \emph{Iterated path integrals}, Bull. Amer. Math. Soc. 83 (1977), 831-879.
\bibitem[D]{Deligne} P.Deligne, \emph{Le groupe fondamental de la droite projective moins trois points}, Galois Groups over $\mathbb{Q}$ (Berkeley, CA, 1987), Math. Sci. Res. Inst. Publ. 16, Springer-Verlag, New York, 1989.
\bibitem[DG]{Deligne Goncharov} P. Deligne, A.B. Goncharov, \emph{Groupes fondamentaux motiviques de Tate mixtes}, Ann. Sci. Ecole Norm. Sup. 38.1 , 2005, pp. 1-56
\bibitem[G]{Goncharov} A.B.Goncharov - \emph{Multiple polylogarithms and mixed Tate motives}, arXiv:0103059v4
\bibitem[HS]{Hirose Sato} M. Hirose, S. Sato - \emph{Algebraic differential formulas for the shuffle, stuffle and duality relations of iterated integrals}, arXiv:1801.03165
\bibitem[IKZ]{IKZ} K. Ihara, M. Kaneko, D. Zagier - \emph{Derivation and double shuffle relations for multiple zeta values}, Compos. Math. 142 (2006) 307-338
\bibitem[J I-1]{J1} D. Jarossay - \emph{A bound on the norm of overconvergent $p$-adic multiple polylogarithms}, arXiv:1503.08756, to appear in J. Number Theory
\bibitem[J I-2]{J2} D. Jarossay - \emph{Pro-unipotent harmonic actions and a computation of $p$-adic cyclotomic multiple zeta values}, arXiv:1501.04893 (submitted)
\bibitem[J I-3]{J3} D. Jarossay - \emph{Pro-unipotent harmonic actions and a dynamical method for the computation of $p$-adic cyclotomic multiple zeta values}, arXiv:1610.09107 (submitted)
\bibitem[J II-1]{J4} D. Jarossay - \emph{Adjoint cyclotomic multiple zeta values and multiple harmonic values}, arXiv:1412.5099 (submitted)
\bibitem[J II-2]{J5} D. Jarossay - \emph{The adjoint quasi-shuffle relation of $p$-adic cyclotomic multiple zeta values recovered by explicit formulas}, arXiv:1601.01158
\bibitem[J II-3]{J6} D. Jarossay - \emph{Cyclotomic multiple harmonic values regarded as periods}, arXiv:1601.01159
\bibitem[J III-1]{J7} D. Jarossay - \emph{$p$-adic cyclotomic multiple zeta values at roots of unity of order non prime to $p$}, arXiv:1708.08009
\bibitem[J IV-1]{J8} D. Jarossay - \emph{Adjoint $p$-adic multiple zeta values at not-all-positive integers}, arXiv:1712.09976
\end{thebibliography}
\end{document}